\newtheorem{theorem}{Theorem}
\newtheorem{lemma}[theorem]{Lemma}
\newtheorem{remark}[theorem]{Remark}
\begin{document}
\baselineskip=15.5pt
\title{Twisted conjugacy in free products}  
\author{Daciberg Gon\c calves}
\address{Instituto de Matem\'amatica e Estat\'istica da Universidade de S\~ao Paulo\\ Departamento de Matem\'atica\\Rua do Mat\~ao, 1010 CEP 05508-090\\S\~ao Paulo-SP, Brasil}
\email{dlgoncal@ime.usp.br}
\author{Parameswaran Sankaran}
\address{Chennai Mathematical Institute , H1 SIPCOT IT Park, Siruseri, Kelambakkam, 603103, India}
\email{sankaran@cmi.ac.in}
\author{Peter Wong}
\address{Department of Mathematics, Bates College, Lewiston, Maine, USA}
\email{pwong@bates.edu}
\thanks{The first author is partially supported by Projeto Tem\'atico-FAPESP Topologia Alg\'ebrica, Geom\'etrica e Diferencial 2016/24707-4 (S\~ao
Paulo-Brazil). The first and third authors thank the IMSc (August 2018) and the CMI, Chennai (December 2019), for their support during their visits. Second and third authors thank the IME-USP, S\~ao Paulo for its support during the authors' visit in February 2019.} 
\subjclass[2010]{20E45, 22E40, 20E36\\
Key words and phrases: Twisted conjugacy,  three-manifolds, free product of groups}

\date{\today}

\begin{abstract}  Let $\phi:G\to G$ be an automorphism of a group which is a free-product of finitely many groups each of which is freely indecomposable and two of the factors contain proper finite index characteristic subgroups.  We show that $G$ has infinitely many $\phi$-twisted conjugacy classes.  As an application, we show that if $G$ is the fundamental group of a three-manifold that is not irreducible, then $G$ has property $R_\infty$, that is, there are infinitely many $\phi$-twisted conjugacy classes in $G$ for every automorphism $\phi$ of $G$.
\end{abstract}
\maketitle

\section{Introduction}
Let $G$ be an infinite group.  Given an automorphism $\phi: G\to G$, one has an action of $G$ on itself, known as the $\phi$-twisted conjugation, defined as 
$g.x=gx\phi(g^{-1})$. The orbits of this action are 
the $\phi$-twisted conjugacy classes.  Let $\mathcal R(\phi)$ denote the orbit space. 
We denote by $R(\phi)$ the cardinality of $\mathcal R(\phi)$ if it is finite, 
and, when $\mathcal R(\phi)$ is infinite we set $R(\phi):=\infty$ and  $R(\phi)$ is called the Reidemeister number of $\phi$. 
One says that $G$ has the $R_\infty$-property, or that $G$ is an $R_\infty$-group, if $R(\phi)=\infty$ for every 
automorphism $\phi$ of $G$.  The notion of Reidemeister number first arose in the Nielsen-Reidemeister fixed point theory. 
Classifying (finitely generated) groups according to whether or not they have 
the $R_\infty$-property is an interesting problem and has emerged as an active research area that 
has enriched our understanding of finitely generated groups.  

The fundamental group of a closed connected three-dimensional manifold is an important invariant 
of the manifold as it carries a lot of information concerning its topology.  The main motivation for this 
work is to understand which manifolds have the property that their fundamental groups have the 
$R_\infty$-property.  We have not been able to completely answer this question.  However, 
we obtain a very general result showing that a wide class of groups have the $R_\infty$-property.  This  
yields a partial answer, to the above question covering a large class of compact three-manifolds.  

Recall that a closed connected three dimensional manifold $M$ is said to be {\it prime} if $M=M_1\# M_2$ implies that 
at least one of the $M_i$ is a $3$-sphere.  One says 
that $M$ is {\it irreducible} if every embedded $2$-sphere is the boundary of a $3$-disk in $M$.    
\footnote{We work in the PL or smooth category.   Note that every three-manifold admits (unique) PL and smooth structures.}  
Every 
irreducible manifold is prime, but the converse is not true:
$\mathbb {S}^2\times \mathbb {S}^1$ is an example of a prime manifold which is not irreducible.  
If $M$ is irreducible and 
has infinite fundamental group, then the sphere theorem (due to C. D. Papakyriakopoulos) implies that 
$M$ is a $K(\pi,1)$-space.  A fundamental result in three-manifold theory is that every 
closed connected orientable $3$-manifold $M$, can be expressed as a connected sum: 
$M\cong M_1\# \cdots \# M_k$ where each $M_j$ is prime (and not the $3$-sphere).  Moreover 
the decomposition is unique (up to reordering of the factors).  When $M$ is non-orientable, one still has a prime decomposition.  
However, the uniqueness part fails.  If $P$ is the non-trivial $\mathbb S^2$-bundle over $\mathbb S^1$, then  
$P\# N=(\mathbb S^2\times \mathbb S^1)\# N$ when 
$N$ is non-orientable.   In view of this, in the case when $M$ is non-orientable, one may assume that none of 
its prime factors $M_i$ is $\mathbb S^2\times \mathbb S^1$.  With this restriction the uniqueness part is valid. 
See \cite[Chapter 3]{hempel}.  As for any finitely generated group, $\pi_1(M)$ may be decomposed 
as a free product of groups $\pi_1(M)=G_1*\cdots*G_r$ where each $G_i$ is freely indecomposable. 
It turns out that $r=k$ and after a reordering of indices $G_i=\pi_1(M_i), 1\le i\le k$.

Our main result is the following:\\

{\bf Theorem \ref{main-manifold}.} {\em Let $M$ be a non-prime compact connected three-manifold.  Then $\pi_1(M)$ has the $R_\infty$-property.}

The algebraic result which leads to the above as a consequence is the following. 

\begin{theorem}\label{main-group}  Let $k\ge 2$.  
Suppose that $G=G_1*\cdots *G_k$ where (i) each $G_i$ is freely indecomposable, and, (ii)  $G_i$ has a proper 
characteristic subgroups of finite index for $i=1,2$.  Then $G$ has the $R_\infty$-property.  
\end{theorem}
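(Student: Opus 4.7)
The plan is to build a characteristic quotient $\bar G = G/\hat N$ of $G$ which itself has the $R_\infty$-property; since the quotient map induces a surjection of $\phi$-twisted conjugacy class-sets, one then has $R(\phi) \ge R(\bar \phi) = \infty$, as desired.

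For each $i \in \{1,\ldots,k\}$, I would define the \emph{characteristic core} $K_i \subseteq G_i$ to be the intersection of all characteristic subgroups of $G_i$ of index at most $N$, where $N$ is chosen large enough that $G_1$ and $G_2$ each admit a proper characteristic subgroup of index $\le N$ (possible by hypothesis). Each $K_i$ is then characteristic of finite index in $G_i$, with $K_1$ and $K_2$ proper. The crucial feature is that $K_i$ is defined intrinsically, so any isomorphism $G_i \to G_j$ carries $K_i$ onto $K_j$.

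Next I would regroup the given Grushko-type decomposition as $G = H_1 * \cdots * H_m * F_r$, separating the non-infinite-cyclic factors $H_j$ from those isomorphic to $\bz$, the latter being combined into $F_r$. By the uniqueness (up to conjugacy and reordering) of the non-cyclic freely indecomposable factors in a Grushko decomposition, $\phi$ induces a permutation $\sigma$ of $\{1,\ldots,m\}$ and elements $g_j \in G$ with $\phi(H_j) = g_j H_{\sigma(j)} g_j^{-1}$; the iso-invariance of the characteristic core then forces $\phi(K_j) = g_j K_{\sigma(j)} g_j^{-1}$ for every $j$. Consequently the normal closure $\hat N := \langle\langle K_1,\ldots,K_m\rangle\rangle^G$ is characteristic in $G$, and the standard description of quotients of a free product by normal closures of subgroups of the factors yields
\begin{equation*}
\bar G \;=\; G/\hat N \;\cong\; (H_1/K_1) * \cdots * (H_m/K_m) * F_r,
\end{equation*}
a free product of finite groups (some possibly trivial) together with a free factor.

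It remains to verify that $\bar G$ has the $R_\infty$-property, and I would split into cases according to how many of $G_1, G_2$ are infinite cyclic. If both are cyclic, then $r \ge 2$ and $\bar G$ admits a further characteristic quotient onto $F_r$, which has $R_\infty$ by the classical theorem of Fel'shtyn--Levitt--Lustig; if exactly one is cyclic, then $\bar G$ carries at least one nontrivial finite factor together with a free factor of rank $r \ge 1$; if neither is, then $\bar G$ has at least two nontrivial finite factors. The main obstacle I foresee lies in the last two cases: establishing the $R_\infty$-property for a free product having at least two nontrivial freely-indecomposable factors (at least one of them finite). I anticipate handling this through a Bass--Serre tree or cyclically-reduced normal-form analysis for the free product, presumably packaged as an auxiliary proposition earlier in the paper.
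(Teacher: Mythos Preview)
Your approach is essentially the same as the paper's: build a characteristic quotient of $G$ that is a free product of finite groups together with a free group, and then use that such a group is virtually free (non-abelian except in the infinite-dihedral case) and hence has the $R_\infty$-property. The auxiliary proposition you anticipate needing is exactly Lemma~\ref{freeproductfinite}, and the fact that automorphisms of $G$ permute the conjugacy classes of the non-cyclic indecomposable factors (carrying characteristic subgroups to characteristic subgroups) is packaged as Lemma~\ref{characteristic}. Your case split, according to how many of $G_1,G_2$ are infinite cyclic, lines up with the paper's split on the number $r$ of cyclic factors once one observes that the paper's first move is to pass to the characteristic quotient killing every factor not isomorphic to $G_1$, $G_2$, or~$\bz$.

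One point needs care. Your characteristic core $K_i$ is defined as the intersection of \emph{all} characteristic subgroups of $G_i$ of index at most $N$. For this intersection to have finite index you would need to know that there are only finitely many such subgroups; this is immediate when $G_i$ is finitely generated, but the theorem makes no such assumption on the factors $G_i$ for $i\ge 3$, and the claim is not obvious in general. If some $K_i$ fails to have finite index, then $H_i/K_i$ is not finite and your description of $\bar G$ breaks down. The paper avoids this entirely: it first passes to the quotient $G\to G_1*\cdots *G_n$ that annihilates every factor not isomorphic to $G_1$, $G_2$, or $\bz$ (this kernel is characteristic by the Kuro\v s argument), so that afterwards every surviving non-cyclic factor is isomorphic to one of $G_1,G_2$ and a \emph{single} given proper finite-index characteristic subgroup can be used in each. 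You can repair your argument in the same way, or equivalently just declare $K_i=G_i$ whenever $G_i$ is not isomorphic to $G_1$ or $G_2$; the iso-invariance you need is then trivially maintained.
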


The main tool used in the proof of Theorem \ref{main-group} is Kuro\v s subgroup theorem.   It is well-known that no group is both a nontrivial free product and a nontrivial direct product. See \cite[Observation, p. 177]{ls}.  Thus, if $H=H_0\times H_1, H_0, H_1$ are any two nontrivial groups with 
$H_0$ is a finite group with trivial centre and if $H_1$ is torsionless, then $H$ is freely indecomposable and admits finite index characteristic subgroup, namely $H_1$.   To see this we note that (i) any automorphism of $H$ maps $H_0$ to itself since $H_1$ is torsion-free, the centralizer of $H_1$ in $H$ contains $H_0$, and, (iii) the only 
element of $H_0$ in 
the centralizer of any element $(h_0,h_1)\in H$ is the trivial element. So $H_1$ is characteristic in $H$. 
Therefore we see that the hypotheses on the free factors of $G$ in Theorem \ref{main-group} hold for a large 
family of groups. 

Theorem \ref{main-manifold} 
follows easily from Theorem \ref{main-group} using the fact that  
the fundamental group of a compact three-manifold is residually finite.  (See \cite[Theorem 3.3]{thurston}, \cite{hempel2}). 

We should point out that Fel'shtyn outlined in \cite{fel} the main steps of a proof which shows that finitely generated non-elementary relatively hyperbolic groups have property $R_{\infty}$. This proof relies on group actions on $\mathbb R$-trees and other notions from geometric group theory. Thus Fel'shtyn's result will imply that any finite free product of freely indecomposable finitely generated groups has property $R_{\infty}$ from which Theorem \ref{main-manifold} will follow. On the other hand, Theorem \ref{main-group} does not assume that the free factors are finitely generated and the proof uses elementary techniques from combinatorial group theory. Hence, Theorem \ref{main-group} does not follow from the result of \cite{fel}. For instance, if $G$ is a freely indecomposable 
torsion-free group containing a proper finite index characteristic subgroup and if $H=\bigoplus_{p} \mathbb Z_p$ where $p$ varies over the set of all primes, then $G\ast H$ has property $R_{\infty}$ while $H$ is not finitely generated (see also \cite{sw}).

\section{The $R_\infty$-property of a free product}
Our goal here is to establish the $R_\infty$-property for a free product $G=G_1*\cdots * G_n,n\ge 2,$ for a 
wide class of groups $G_i$.  The main tool will be the Kuro\v s theorem that reveals the structure of a subgroup 
of a free product.  The strategy of proof would be to first establish our goal when all the $G_i$ are finite. Here the case 
$n=2$ is well-known.  We then reduce the general case, under suitable hypotheses on the $G_i$, to the case
of free product of finite groups.

We begin by recalling the Kuro\v s subgroup theorem. Let $G$ be a free product of groups $G=G_1*\cdots *G_n$ 
and let $K$ be a subgroup of $G$.  Then $K$ is itself a free product of groups 
\[K=F_0*H_1*\cdots *H_n\eqno(*)\] 
where each $H_j$ is a free product of a family of subgroups $\{\alpha_{i,j} H_{i,j}\alpha_{i,j}^{-1}\}_{i\in J_j} $ of $G$ 
for suitable elements  $\alpha_{i,j}\in G$ and suitable subgroups  
$H_{i,j}\le G_j, i\in J_j$ for some indexing set $J_j, 1\le j\le n$.  

The following lemma is a standard application of the Kuro\v s subgroup theorem.  We include a proof for the 
sake of completeness.

\begin{lemma} \label{freeproductfinite}
Let $G=G_1*\cdots *G_n$ where 
each $G_i, 1\le i\le n$ is a finite non-trivial group.\\
Then $G$ is virtually free and hence has the $R_\infty$-property if $n\ge 2$.
\end{lemma}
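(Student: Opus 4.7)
The plan is to first show that $G$ is virtually free via the Kuro\v s theorem, and then to deduce the $R_\infty$-property. For virtual freeness, I would consider the homomorphism $\pi\colon G\to G_1\times\cdots\times G_n$ determined by the canonical inclusions $G_i\hookrightarrow G_1\times\cdots\times G_n$; it exists by the universal property of the free product and has finite image, so $N:=\ker\pi$ is a normal subgroup of finite index. Since $\pi|_{G_i}$ is injective, $N\cap G_i=\{1\}$, and by normality $N\cap gG_ig^{-1}=\{1\}$ for every $g\in G$ and every $i$. Applying the Kuro\v s decomposition $(*)$ to $N\le G$, each factor $\alpha_{i,j}H_{i,j}\alpha_{i,j}^{-1}$ lies in $N\cap\alpha_{i,j}G_j\alpha_{i,j}^{-1}=\{1\}$, forcing every $H_{i,j}$, and therefore every $H_j$, to be trivial. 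Thus $N=F_0$ is free and $G$ is virtually free.

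To deduce $R_\infty$, I would first note that $G$ is infinite, since it contains $G_1*G_2$. If the free subgroup $N$ has rank at least $2$, then $G$ is virtually non-abelian free, hence a non-elementary word-hyperbolic group, and the $R_\infty$-property follows from the theorem of Fel'shtyn \cite{fel} mentioned in the introduction. A short Bass-Serre / Euler-characteristic calculation (using that $G$ acts on its Bass-Serre tree $T$ with vertex stabilizers the $G_i$ and trivial edge stabilizers, so $N$ is the fundamental group of the finite graph $T/N$) yields $\mathrm{rank}(N)=1+[G:N]\bigl(n-1-\sum_{i=1}^{n}1/|G_i|\bigr)$. This is at least $2$ in every case except $n=2$ with $|G_1|=|G_2|=2$, i.e. $G\cong\bz/2*\bz/2\cong D_\infty$.

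The main obstacle is therefore the single remaining case $G\cong D_\infty$, where the hyperbolic-group machinery does not apply. I would handle it directly: writing $D_\infty=\langle t,s\mid s^2=1,\ sts=t^{-1}\rangle$, one checks that every automorphism has the form $\phi(t)=t^{\epsilon}$, $\phi(s)=t^{n}s$ for some $\epsilon\in\{\pm 1\}$ and $n\in\bz$. A brief orbit calculation on the infinite cyclic subgroup $\langle t\rangle$ then shows that each $\phi$-twisted conjugacy orbit meets $\{t^m:m\in\bz\}$ in at most two elements, so there are infinitely many classes and $R(\phi)=\infty$ in every case.
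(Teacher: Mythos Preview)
Your approach mirrors the paper's almost exactly: the same homomorphism to the direct product, the same Kuro\v s argument to see the kernel is free, and the same dichotomy between the infinite dihedral case and the non-elementary hyperbolic case. The paper simply cites \cite{gw} for $D_\infty$ and \cite{ll} (Levitt--Lustig) for the non-elementary word-hyperbolic case, whereas you add an explicit Euler-characteristic rank computation and attempt the dihedral case by hand.

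However, your direct treatment of $D_\infty$ contains a genuine error. When $\epsilon=-1$ (so $\phi(t)=t^{-1}$), twisted conjugation by $t^k$ sends $t^m$ to
\[
t^k\,t^m\,\phi(t^k)^{-1}=t^k t^m t^{k}=t^{\,m+2k},
\]
so the orbit of $t^m$ already contains $\{t^{m+2k}:k\in\bz\}$, which is infinite. Hence your claim that every $\phi$-twisted orbit meets $\langle t\rangle$ in at most two elements is false for this half of $\aut(D_\infty)$, and your argument does not yield $R(\phi)=\infty$ in that case. The repair is easy: for $\epsilon=-1$ one should look instead at the coset $\langle t\rangle s$, where a similar computation shows $t^k.(t^ms)=t^ms$ and $(t^ks).(t^ms)=t^{\,n-m}s$, so each orbit meets $\langle t\rangle s$ in at most two elements and $R(\phi)=\infty$ follows. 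Alternatively, just cite \cite{gw} as the paper does.
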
 
\begin{proof}
The statement that $G$ is virtually free is trivially valid when $n=1$.  So assume that $n\ge 2$. 
We consider the kernel of projection $\eta: G\to G_1\times \cdots \times G_n$, denoted $K$. 
Note that $\eta$ maps any conjugate of $G_i$ isomorphically onto $G_i$.  Therefore, if $H_i$ is a subgroup of $G_i$ and 
$g\in G$, then $\eta(gH_ig^{-1})$ maps onto $H_i$.  It follows that, writing $K=F_0*K_1*\cdots * K_n$ 
as in $(*)$, we see that $K_i$ are trivial for all $i$. Therefore $K=F_0$ is a free group. Since $G/K=\prod G_i$ is finite, the index of 
$K$ in $G$ is finite.   Since $G$ is finitely generated, the same is true of $K$.

If $n=2$ and $G_1\cong G_2\cong \mathbb Z_2$, then $G$ is infinite dihedral and it is known that 
$G$ has the $R_\infty$-property (see \cite{gw}).  In all other cases, with $n\ge 2$, $K$ is a non-abelian free group of finite rank. 
It follows that $G$ is finitely generated non-elementary word hyperbolic and thus has the $R_\infty$-property by \cite{ll}. 
\end{proof}
 We say that a nontrivial group is {\it freely indecomposable} if it cannot be expressed as a free product of 
 two nontrivial groups. The only nontrivial free group which is freely indecomposable is the infinite cyclic group.

If $\alpha:G\to H$  is an isomorphism and if $C\subset G$ is a characteristic subgroup of $G$, then $\alpha(C)$ is 
characteristic subgroup of $H$ which is independent of the choice of $\alpha$.  Indeed, if $\beta:G\to H$ is another isomorphism then $\beta\circ\alpha^{-1}:H\to H$ 
is an automorphism. Since $\alpha(C)$ is characteristic, we have 
$\alpha(C)=\beta\circ\alpha^{-1}(\alpha (C))=\beta(C)$.  
 
 \begin{lemma} \label{characteristic}
 Let $G=G_1*\cdots *G_n$ where each $G_j, 1\le j\le n,$ is freely indecomposable and not infinite cyclic.  Let 
 $C_j\subset G_j$ be a characteristic subgroup of $G_j$, $1\le j\le n$.  Fix an isomorphism $\alpha_{ij}:G_i\to G_j$ 
whenever $G_i, G_j$ are isomorphic.  
 Then the 
 subgroup $K$ of $G$ generated by the family $\mathcal C$ of subgroups $gC_jg^{-1}, g\alpha_{ij}(C_i)g^{-1} \subset G, g\in G, 1\le i,  j\le n$, is  characteristic in $G$.  
 \end{lemma}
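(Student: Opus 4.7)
The plan is to show $\phi(K) = K$ for every automorphism $\phi$ of $G$ by verifying that $\phi$ sends each generator in the family $\mathcal{C}$ to another member of $\mathcal{C}$. The key input I would use is the classical consequence of the Kuro\v s subgroup theorem (due to Kuro\v s / Fouxe-Rabinovitch): since each $G_j$ is freely indecomposable and not infinite cyclic, any automorphism $\phi$ of $G = G_1 * \cdots * G_n$ permutes the free factors up to conjugation. That is, there is a permutation $\sigma \in S_n$ and elements $g_j \in G$ such that
\[
\phi(G_j) = g_j\, G_{\sigma(j)}\, g_j^{-1}, \qquad 1 \le j \le n.
\]
In particular, $G_j \cong G_{\sigma(j)}$, so a chosen isomorphism $\alpha_{j,\sigma(j)}$ has been fixed.

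Next, I would exploit the observation recorded in the paragraph preceding the lemma: if $\alpha, \beta : G_j \to G_{\sigma(j)}$ are any two isomorphisms and $D \subset G_j$ is characteristic, then $\alpha(D) = \beta(D)$. Applying this to $\alpha = \alpha_{j,\sigma(j)}$ and the isomorphism $\beta : x \mapsto g_j^{-1}\phi(x)g_j$ from $G_j$ to $G_{\sigma(j)}$, I get
\[
\phi(C_j) = g_j\, \alpha_{j,\sigma(j)}(C_j)\, g_j^{-1},
\]
which lies in $\mathcal{C}$. The same argument, applied to the characteristic subgroup $\alpha_{ij}(C_i)$ of $G_j$, yields
\[
\phi\bigl(\alpha_{ij}(C_i)\bigr) = g_j\, \bigl(\alpha_{j,\sigma(j)} \circ \alpha_{ij}\bigr)(C_i)\, g_j^{-1} = g_j\, \alpha_{i,\sigma(j)}(C_i)\, g_j^{-1},
\]
again using that the image of a characteristic subgroup is independent of the isomorphism chosen.

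Combining these, for any generator $g \alpha_{ij}(C_i) g^{-1}$ (the case $g C_j g^{-1}$ being formally $i = j$ with $\alpha_{jj} = \mathrm{id}$), I obtain
\[
\phi\bigl(g\, \alpha_{ij}(C_i)\, g^{-1}\bigr) = \bigl(\phi(g)\, g_j\bigr)\, \alpha_{i,\sigma(j)}(C_i)\, \bigl(\phi(g)\, g_j\bigr)^{-1},
\]
which is again a member of $\mathcal{C}$. Hence $\phi(K) \subseteq K$, and applying the same reasoning to $\phi^{-1}$ gives the reverse inclusion, so $\phi(K) = K$.

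The only delicate point, and the main obstacle, is justifying the permutation-of-factors statement in the first step. This is not a straightforward consequence of the Kuro\v s theorem alone; it requires the additional hypothesis that the free factors are freely indecomposable and \emph{not} infinite cyclic, which prevents the well-known extra freedom (Nielsen-type moves) that would otherwise be present. I would cite the Fouxe-Rabinovitch / Kuro\v s uniqueness result directly, or give a short derivation from $(*)$ noting that the image $\phi(G_j)$ is itself freely indecomposable and non-cyclic, so in its Kuro\v s decomposition inside $G$ the free part $F_0$ vanishes and only one conjugate factor $\alpha G_{\sigma(j)} \alpha^{-1}$ appears.
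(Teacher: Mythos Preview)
Your proposal is correct and follows essentially the same route as the paper: the paper also derives (inline, from the Kuro\v s theorem applied to the freely indecomposable, non-cyclic subgroup $\phi(G_j)$) that $\phi(G_j)=g_jG_{k_j}g_j^{-1}$, then uses the same ``image of a characteristic subgroup is independent of the isomorphism'' observation to conclude $\phi(A_j)=g_j\alpha_{jk_j}(A_j)g_j^{-1}$ for $A_j=C_j$ or $A_j=\alpha_{ij}(C_i)$, hence $\mathcal C$ is $\phi$-stable. The only cosmetic difference is that you phrase the first step as the Fouxe-Rabinovitch permutation-of-factors statement, whereas the paper extracts it directly from the Kuro\v s decomposition $(*)$ together with $\phi(G)=G$.
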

 \begin{proof}  
 Evidently $K$ is normal in $G$ since the family $\mathcal C$ is closed under conjugation.   We need only 
show that the $\mathcal C$ is closed under any automorphism of $G$.  

 Let $\phi:G\to G$.  Consider the subgroup $\phi(G_j).$  Since $G_j$ is freely indecomposable and is not infinite cyclic, 
 the same is true of $\phi(G_j)$.  By the Kuro\v s subgroup theorem, $\phi(G_j)$ is contained in 
$g_j G_{k_j} g_j^{-1}$ for some $k_j\le n$ and $g_j\in G$. Therefore $\phi(G)=\phi(G_1)\ast\cdots\ast\phi(G_n)
\subset g_1G_{k_1}g_1^{-1} \ast\cdots\ast g_nG_{k_n}g_n^{-1}\subset G$. Since $\phi(G)=G$ we must have the equality 
$\phi(G_j)=g_jG_{k_j}g_j^{-1}$ for all $j$.   In particular $\iota_{g_j^{-1}}|_{G_{k_j}} \circ \phi|_{G_j} :G_j\to G_{k_j}$ is an isomorphism, which 
we shall denote by $\kappa_j$. Here $\iota_g$ denotes the inner automorphism $x\mapsto gxg^{-1}$ of $G$.  

Let $A_j\subset G_j$ be any characteristic subgroup of $G_j$.  Then $\kappa_j(A_j)=\alpha_{jk_j}(A_j)$. 
Therefore $\phi(A_j)=g_j(\alpha_{jk_j}(A_j))g_j^{-1}$. 

Taking $A_j$ to be $C_j$ or $\alpha_{ij}(C_i)$, it follows that the family $\mathcal C$ is closed under 
any automorphism of $G$. Hence $K$ is characteristic in $G$. 
 \end{proof}
 
\begin{remark}\label{coherentchoice}
{\em 
(i) In our application, we shall choose the characteristic subgroups $C_j$ so that whenever $G_i$ and $G_j$ are isomorphic, $C_i$  corresponds to $C_j$ under an isomorphism $G_i\to G_j$.  In this case $K\subset G$ is generated as a normal subgroup by the finite collection of subgroups $C_j, 1\le j\le n$. 

(ii) 
We remark that a finite index subgroup of a freely indecomposable group is not necessarily freely indecomposable.
For example $SL(2,\mathbb Z)$ is virtually free with a finite index non-abelian free subgroup but is freely indecomposable.  }
\end{remark}
 
{\it Proof of Theorem \ref{main-group}:}  By relabelling if necessary, we assume that 
(i) $G_1, \ldots, G_n$ are the free factors of $G$ such that either $G_i$ is infinite cyclic or is isomorphic to one of the 
groups $G_1,G_2$, and, (ii) the groups $G_j$ is not isomorphic to any of the groups 
$G_1, G_2, \mathbb Z,$ for $n<j\le k$.  Note that $n\ge 2$.

Let $K$ be the kernel of the natural projection $G\to G_1*\cdots *G_n$ that maps each $G_i$ identically onto $G_i, 1\le i\le n$ and maps $G_j$ to the trivial group for $j>n$.   Then, by Lemma \ref{characteristic}, $K$ is characteristic.    To show that $G$ has the $R_\infty$-property, we need only prove that $G_0:=G_1*\cdots *G_n$ has the $R_\infty$ property.  
 
The proof will be divided into three cases depending on the number of 
groups $G_i, 1\le i\le n,$ that are isomorphic to $\mathbb Z$ being zero, or one or at least two. We shall denote this 
number by $r$.  Relabelling if necessary, we assume that $G_i\cong \mathbb Z$ if $1\le j\le r$ in case $r>0$. 

 Let $C_i\subset G_i$ be a proper finite index characteristic 
subgroup of $G_i$. We assume, as we may, that whenever  $G_i\cong G_j$, then  $C_j$ corresponds to $C_i$ (under 
an isomorphism $G_i\to G_j$).  

{\it Case 1:} Suppose that none of the $G_i$ is infinite cyclic.  Set $\bar{G}_i=G_i/C_i, 1\le i\le n$, and let 
$\bar{G}=\bar{G}_1\ast \cdots\ast\bar{G}_n$. 
Let $K_0$ be the kernel of the natural projection $G_0\to \bar G_0$.  Then $K_0$ is normally generated by the finite collection of subgroups $\{C_j\mid 1\le j\le n\}$.  Hence 
by Lemma 
\ref{characteristic}, $K_0$ is characteristic in $G_0$.  By Lemma \ref{freeproductfinite}, $\bar G_0$ has the $R_\infty$-property.
It follows that $G_0$ has the $R_\infty$-property. 

{\it Case 2:} Suppose that $r>1$ so that $G_i\cong \mathbb Z$ for $1\le j\le r$ and $G_j\not\cong \mathbb Z$ for $j>n$.   If $r=n$, in view of the fact that $n\ge 2$, $G_0$ is a non-abelian free group of finite rank 
and so has the $R_\infty $ property.  

So suppose that $2\le r<n$.  Set $A:=G_1*\cdots *G_r, B:=G_{r+1}*\cdots *G_n$ so that $G_0=A*B$  where $A$ is a {\it non-abelian} 
free group of rank $r$.  Let $K_1$ be the kernel of the projection $G\to A$.   Then $K_1$ is the 
free product of the family of groups $\mathcal C=\{g G_j g^{-1}\mid g\in G, r<j\le n\}$.  
Since each $G_j, j>r$, is indecomposable and not infinite 
cyclic, under any automorphism of $G_0$, $G_j$ is mapped to a conjugate of a $G_i$ isomorphic to $G_j$ where $r<i\le n$.  It follows that 
$\mathcal C$ is stable by any automorphism of $G_0$.  Therefore $K_1$ is characteristic in $G_0$.  Since $A$ is a free non-abelian group of finite rank, it has the $R_\infty$-property.  It follows that $G_0$ also has the $R_\infty$-property.

{\it Case 3:} Suppose that $r=1$, that is, $G_1\cong \mathbb Z$, $G_j\not \cong \mathbb Z$ for $2\le j\le n$.  We consider the canonical projection $\eta: G_0\to G_1*\bar B$ where $\bar B$ is the free product of $G_i/C_i, 2\le i\le n$. 
The kernel $K_3:=\ker(\eta)$ is generated by the collection $\{gC_jg^{-1}\mid g\in G_0, j\ge 2\}$. 
By Lemma 3, $K_3$ is characteristic in $G_0$ in view of our hypothesis on the $C_j$. (See Remark \ref{coherentchoice} (i).)  Now $\bar B$ is {\it nontrivial} and virtually free by Lemma \ref{freeproductfinite}, possibly finite. It follows that $G_1*\bar{B}\cong \mathbb Z*\bar {B}$ has a {\it non-abelian} free group of finite rank as a finite index subgroup.   So $G_1*\bar{B}$ has the $R_\infty$-property.  Since $K_3$ is characteristic in $G_0$, 
it follows that $G_0$ also has the $R_\infty$-property.  

Thus in all cases, $G_0$ has the $R_\infty$-property as was to be shown. \hfill $\Box$

As an immediate corollary, we obtain

\begin{theorem}\label{main-manifold}
Let $M$ be a non-prime compact connected three-manifold.  Then $\pi_1(M)$ has the $R_\infty$-property.
 \end{theorem}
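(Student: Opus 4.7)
The plan is to invoke the Kneser-Milnor prime decomposition and reduce to Theorem \ref{main-group}. Since $M$ is non-prime, I will write $M \cong M_1 \# \cdots \# M_k$ with $k \ge 2$, each $M_i$ prime and not the $3$-sphere; if $M$ is non-orientable, I will additionally arrange, as recalled in the introduction, that no $M_i$ is $\mathbb S^2 \times \mathbb S^1$. Van Kampen then yields $\pi_1(M) \cong \pi_1(M_1) * \cdots * \pi_1(M_k)$, and a standard consequence of the Kneser-Milnor theorem is that each $\pi_1(M_i)$ is freely indecomposable; infinite cyclic factors, which may arise from prime summands with $\pi_1 \cong \mathbb{Z}$, are freely indecomposable by definition.

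Next I would verify the hypothesis of Theorem \ref{main-group} by showing that each $\pi_1(M_i)$ contains a proper finite-index characteristic subgroup. Being the fundamental group of a compact three-manifold, $\pi_1(M_i)$ is finitely generated and residually finite (\cite[Theorem 3.3]{thurston}, \cite{hempel2}). It is also non-trivial, since $M_i \ne \mathbb S^3$ by choice of prime decomposition (using the Poincar\'e conjecture in the closed case). By residual finiteness there exists a proper finite-index normal subgroup $N_i$ of $\pi_1(M_i)$. Since a finitely generated group has only finitely many subgroups of any given index, the intersection of all subgroups of $\pi_1(M_i)$ of index $[\pi_1(M_i):N_i]$ is characteristic, still of finite index, and contained in $N_i$, hence proper. (If $\pi_1(M_i)$ happens to be finite, the trivial subgroup itself already serves as such a characteristic subgroup.)

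Finally, with $k \ge 2$ freely indecomposable factors, all of which admit proper finite-index characteristic subgroups, Theorem \ref{main-group} applies directly to $\pi_1(M) = \pi_1(M_1) * \cdots * \pi_1(M_k)$ and yields the $R_\infty$-property. The argument encounters no substantial obstacle; its essential content lies in combining two standard inputs, namely the Kneser-Milnor decomposition on the topological side and residual finiteness of three-manifold groups on the algebraic side, the latter precisely supplying---via the finiteness of the set of index-$n$ subgroups of a finitely generated group---the characteristic subgroups demanded by Theorem \ref{main-group}.
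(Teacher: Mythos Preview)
Your proposal is correct and follows essentially the same approach as the paper: prime decomposition gives a free product of freely indecomposable groups, and residual finiteness of three-manifold groups (together with Theorem~\ref{main-group}) yields the $R_\infty$-property. You spell out one step the paper leaves implicit, namely how residual finiteness plus finite generation produce a proper finite-index \emph{characteristic} subgroup by intersecting all subgroups of a fixed index.
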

\begin{proof} Since $M$ is not prime, it admits a prime decomposition:
$M=M_1\#\cdots \#M_k$ where each $M_i$ is a prime manifold and $k\ge 2$.  Thus $\pi_1(M)=\pi_1(M_1)*\cdots*\pi_1(M_k)$. 
(See \cite[Chapter 3]{hempel}.)   Note that since $M_i$ is prime, $\pi_1(M_i)$ 
is freely indecomposable in view of \cite[Theorem 7.1]{hempel}.  
Also, it is known that $\pi_1(M_i)$ is residually 
finite as a consequence of the geometrization theorem and the work of Thurston \cite[Theorem 3.3]{thurston}. 
(See also \cite{hempel2}.) So we may take $G_i$ to be $\pi_1(M_i)$ in Theorem \ref{main-group} and we see that 
$\pi_1(M)$ has the $R_\infty$-property. \end{proof}

\begin{remark}{\em 
(i)
The same arguments as above also yield the following: if $M $ is a connected sum  $M_1\#\cdots \# M_r, 
r\ge 2,$ where each $M_j$ is a connected $n$-manifold ($n\ge 3$) and  $M_1$ and $M_2$ each admits
 a non-trivial finite {\it characteristic cover}, then $\pi_1(M)$ has the $R_{\infty}$ property.  (A cover is characteristic if it corresponds to a characteristic 
subgroup of the fundamental group.)

(ii)  Let $P\subset \mathbb N$ be a nonempty proper subset of primes. Suppose that $p\notin P$.   Let $\mathbb Z(P)\subset \mathbb Q$ be subring $\mathbb Z[1/q\mid q\in P]$.  Since $p\notin P$, we have a natural surjective ring homomorphism $\mathbb Z(P)\to \mathbb Z/p\mathbb Z$.  The kernel is a proper characteristic subgroup of 
$\mathbb Z(P)$. (It consists precisely of elements which are expressible as $px, x\in \mathbb Z(P)$.)
Evidently $\mathbb Z(P)$ is freely indecomposable.  It is readily seen that $\mathbb Z(P)$ is not isomorphic to $\mathbb Z(P')$ if $P\ne P'$ so the collection of such groups has cardinality 
the continuum.  
It is known that $\mathbb Z(P)$ is the fundamental group of an open three-manifold $M(P)$, which is in fact an aspherical space. This can be derived from constructing a non-compact $3$-manifold (in fact, the complement of a solenoid in $\mathbb S^3$) whose fundamental group is $\mathbb Q$ (see e.g. \cite[p.209]{em}).  If $M_j:=M(P_j), 1\le j\le k,$ are such manifolds (where we do {\em not} assume that the $P_j$ are pairwise distinct) then their connected sum $M_1\#\cdots \# M_k$ is an open three-manifold 
whose fundamental group has the $R_\infty$-property, provided $k\ge 2$ and at least two sets, say, $P_1,P_2$ are nonempty 
proper subsets of the set of all primes. 
}
\end{remark}

\begin{remark}{\em
(i) Let $M$ be a closed connected three-manifold such that $\pi_1(M)$ is infinite cyclic.   
Then $M$ is prime.  We claim that $M$ is a $2$-sphere bundle over the circle. 
If $M$ is irreducible, then by the sphere theorem, 
$\pi_2(M)$ is trivial.  Since $\pi_1(M)$ is infinite, it is a $K(\mathbb Z, 1)$-space. Therefore it is homotopic to a circle. 
This is a contradiction since $H_3(M;\mathbb Z_2)\cong \mathbb Z_2$.  So $M$ is not irreducible. By \cite[Lemma 3.13]{hempel}
$M$ is a $2$-sphere bundle over the circle.

(ii) When a three-manifold $M$ admits a geometric structure, in some cases it is known whether or not 
$\pi_1(M)$ has the $R_\infty$-property.  For example, 
when $M$ admits spherical geometry, the fundamental group is finite and it is trivial that $\pi_1(M)$ 
does not have the $R_\infty$-property.  On the other hand, when the manifold admits hyperbolic geometry, then 
$\pi_1(M)$ has the $R_\infty$-property as an immediate consequence of the work of Levitt and Lustig \cite{ll}.  In the case 
of $\mathbb S^2\times \mathbb R$-geometry, we see that $\pi_1(\mathbb S^2\times \mathbb S^1)\cong \mathbb Z$ 
does not have $R_\infty$-property whereas $\pi_1(\mathbb RP^3\#\mathbb RP^3)\cong \mathbb Z_2*\mathbb Z_2$ has 
the $R_\infty$-property; see \cite{gw}.  It can be shown that the fundamental groups of 
Seifert fibre spaces have the $R_\infty$-property provided 
the base surface has genus at least $2$.  However the complete classification for geometric three-manifolds will take us too far a field.  It will be carried out in a forthcoming article \cite{gsw}.

In general, by Thurston's geometrization conjecture (Perelman's theorem),  
for any orientable prime three-manifold $M$ that is not geometric, the fundamental group of $M$ is an iterated 
amalgamated free product of fundamental groups of geometric manifolds where the amalgamating group 
is $\mathbb Z^2$.  Our approach to Theorem \ref{main-group} fails in this setting.     
}
\end{remark}

\noindent
{\bf Acknowledgement:} We thank Mihalis Sykiotis for pointing out some gaps in our arguments in an earlier version of this paper.

\end{document}